\newtheorem{Theorem}{Theorem}[section]
\newtheorem{Definition}{Definition}
\newtheorem{Lemma}[Theorem]{Lemma}
\newtheorem{Remark}[Theorem]{Remark}
\numberwithin{equation}{section} \allowdisplaybreaks
\renewcommand\abstract{{\bf Abstract}}
\begin{document}
	\title{Uniqueness of the critical points of solutions to two kinds of semilinear elliptic equations in higher dimensional domains\footnote{\footnotesize The work is supported by National Natural Science Foundation of China (No.12001276, No.12271093, No.12326303, No.12326301).}}

	\author{Haiyun Deng$^{1}$, Jingwen Ji$^{2}$, Feida Jiang$^{2}$\footnote{\footnotesize Corresponding author, E-mail: hydeng@nau.edu.cn(H. Deng), jijingwen98@163.com(J. Ji), jiangfeida@seu.edu.cn(F. Jiang), jiabinyin@126.com(J. Yin)}, Jiabin Yin$^{3}$\\[12pt]
		\emph {\scriptsize $^{1}$Department of Applied Mathematics, Nanjing Audit University, Nanjing, 211815, China;}\\
		\emph {\scriptsize $^{2}$School of Mathematics and Shing-Tung Yau Center of Southeast University, Southeast University, Nanjing, 211189, China;}\\
		\emph {\scriptsize $^{3}$School of Mathematics and Statistics, Guangxi Normal University, Guilin, 541004, China}}
	
	%\communicated{}
	\date{}
	\maketitle

	\renewcommand{\labelenumi}{[\arabic{enumi}]}
	
	\begin{abstract}{\bf:}{\footnotesize
			~In this paper, we provide an affirmative answer to the {\it conjecture A} for bounded simple rotationally symmetric domains $\Omega\subset \mathbb{R}^n(n\geq 3)$ along $x_n$ axis. Precisely, we use a new simple argument to study the symmetry of positive stable solutions for two kinds of semilinear elliptic equations. To do this, when $f(\cdot,s)$ is convex with respect to $s$, we show that the positivity of the first eigenvalue of the corresponding linearized operator in somehow symmetric domains is a sufficient condition for the symmetry of $u$.  Moreover, we prove the uniqueness of critical points of a positive stable solution to semilinear elliptic equation $-\triangle u=f(\cdot,u)$ with zero Dirichlet boundary condition for simple rotationally symmetric domains in $\mathbb{R}^n$ by continuity method and a variety of maximum principles.}
		
	\end{abstract}
	
	{\bf Key Words:} uniqueness; critical points; semilinear elliptic equation; stable solutions; higher dimensional domains
	
	{{\bf 2010 Mathematics Subject Classification.} Primary: 35B38, 35J05; Secondary: 35J25.}
	
	\section{Introduction and main results}
	~~~~In this article, we mainly consider the uniqueness of critical points of stable solutions to the following semilinear elliptic equation with Dirichlet boundary condition
	\begin{equation}\label{1.1}\begin{array}{l}
			\left\{
			\begin{array}{l}
				-\triangle u=f(\cdot,u)~~\mbox{in}~~\Omega,\\
				u=0 ~~\mbox{on}~~\partial\Omega,
			\end{array}
			\right.
	\end{array}\end{equation}
	where $\Omega$ is a bounded domain in $\mathbb{R}^{n}(n\geq 3)$, and $f(x,u)\in C^{1,1}(\Omega,\mathbb{R})$ is a convex function with respect to $u$.

	Critical points of solutions to elliptic equations is an important research topic. The type and computation of the number of critical points of positive solution $u$ to Poisson equation $\triangle u+f(u)=0$ is a classic and fascinating problem. However, there are few conclusions about the geometric distribution of critical points of solutions in higher dimensional domains. In this regard, there is a following well known seemingly simple but challenging conjecture, relevant statements can be found in \cite{CabreChanillo} page 2, and \cite{Kawohl} page 103.
	
	\vspace{0.2cm}
	\noindent {\bf Conjecture A.} {\it Let $u$ be a positive solution of the following boundary value problem
		\begin{equation*}\label{}\begin{array}{l}
				\left\{
				\begin{array}{l}
					-\triangle u=f(u)~~\mbox{in}~~\Omega\subset \mathbb{R}^n,~~n\geq 3,\\
					u=0 ~~\mbox{on}~~\partial\Omega,
				\end{array}
				\right.
		\end{array}\end{equation*}
		where $f$ is a  monotone function and $\Omega$ is a general strictly convex domain in $\mathbb{R}^n (n\geq 3)$, then $u$ has a unique non-degenerate critical point in $\Omega$.}
	\vspace{0.2cm}
	
	In higher dimensional space, as far as we know, there are few works about the geometric distribution of critical points of solutions to elliptic equations. Under the assumption of the existence of a semi-stable solution of Poisson equation $-\triangle u=f(u)$ with $u| _{\partial\Omega}=0$, Cabr\'{e} and Chanillo \cite{CabreChanillo} proved that the positive solution $u$ has a unique non-degenerate critical point in bounded smooth convex domains of revolution with respect to an axis in $\mathbb{R}^{n}(n\geq3)$. In 2017, Alberti, Bal and Di Cristo \cite{Alberti} investigated the existence of critical points for solutions to second-order elliptic equations of the form $\nabla\cdot\sigma(x)\nabla u=0$ over a bounded domain with prescribed boundary conditions in $\mathbb{R}^{n}(n\geq3).$ In 2019, Deng, Liu and Tian \cite{Deng2019} considered the geometric distribution of critical points of solution to mean curvature equations with Dirichlet boundary condition on some special symmetric domains in $\mathbb{R}^{n}(n\geq3)$ by the projection method. In 2023, Grossi and Luo \cite{GrossiLuo} studied the number and location of critical points of positive solutions of semilinear elliptic equations  $\triangle u+f(u)=0$ with $u_{\partial\Omega_\varepsilon}=0$ in domains $\Omega_\varepsilon=\Omega\setminus B(P,\varepsilon)$, where $\Omega\subset\mathbb{R}^n(n\geq 3)$ is a smooth bounded domain. In fact, they mainly settled the relationship between the number and position of critical points of solutions in domain $\Omega_\varepsilon$ and domain $\Omega$. Other related results can be found in \cite{AlessandriniMagnanini1,Chen,De,Deng2018,Deng2022,Deng2023,GrossiIanni,Magnanini} and references therein.
	
	There are a few more results about the Hausdorff measure estimation of critical point and singular sets of solutions to elliptic equations in higher dimensional space. In 1996, M. Hoffmann-Ostenhof, T. Hoffmann-Ostenhof and N. Nadirashvili \cite{Hoffmann-Ostenhof} proved that the singular sets of smooth solutions for the second order elliptic equation in three dimensional space have locally finite one dimensional Hausdorff measure. This result was generalized to higher dimensional space in 1999 (see \cite{Hardt1}). In 1998, Han, Hardt and Lin \cite{Han1998} concerned with the geometric measure of singular sets of weak solutions of elliptic second-order equations. They gave a uniform estimate on the measure of singular sets in terms of the frequency of solutions. In 2003, Han, Hardt and Lin \cite{Han2003} studied the singular sets of solutions to arbitrary order elliptic equations. In 2015, Cheeger, Naber and Valtorta \cite{Cheeger} introduced some techniques for giving improved estimates of the critical set, including the Minkowski type estimates on the effective critical set $C_r(u)$, which roughly consists of points $x$ such that the gradient of $u$ is small somewhere on $B_r(x)$ compared to the nonconstancy of $u.$ In 2017, Naber and Valtorta \cite{Naber} introduced some new techniques for estimating the critical set and singular set, which avoids the need of any such $\varepsilon$-regularity lemma. In 2020, Deng, Liu and Tian \cite{Deng2020} gave a new classification of singular sets of solutions $u$ to a kind of linear elliptic equations. They obtained the Hausdorff measure estimation of singular sets of solutions $u$ by defining the $j-$symmetric function and the $j-$symmetric singular set of $u$. See \cite{Lin2024} and references therein for some related work.
	
	It is well known that the method of level set is generally used to consider the geometric distribution of critical points of solutions in the planar domains, through the fine analysis of level sets. However, this method is very difficult to be generalized to the higher dimensional domains. In this paper, we try to build the continuity method based on a variety of maximum principles, which is natural and effective to settle such a problem in higher dimensional space. Continuity method has been widely used in various problems effectively and has a certain degree of recognition. The typical example is to establish Schauder estimates for elliptic equations; and methods of moving plane introduced by Serrin \cite{Serrin}, developed by Gidas, Ni and Nirenberg \cite{GidasNi}, Berestycki and Nirenberg \cite{Berestycki} to prove symmetry of positive solution of Dirichlet boundary value problems. For the sake of clarity, we now explain the method and our ideas. The main idea our proof depends on the stability of Morse function and a variety of maximum principles. The main finding of this paper is that the solution $u$ of (\ref{1.1}) is stable under small domain deformation, namely, the number, location and type of critical points remain preserved.  This is a crucial finding, which is quite surprising. This is an essentially different method from the previous projection method in \cite{CabreChanillo,Deng2019}. Essentially, the projection method requires that the domains be strictly convex. We consider the distribution of critical points of stable solutions in a kind of special symmetric domain - simple rotationally symmetric domain along $x_n$ axis.
	
	Now we introduce some definitions.
\begin{Definition}\label{def1}
		(stable solution) We say that $u$ is a stable solution of (\ref{1.1}) if the linearized operator $\mathfrak{L}=-\triangle -f'(\cdot,u)$ of (\ref{1.1}) at $u$ is definite, that is, if
\begin{equation}\label{1.2}
		 (\mathfrak{L}\phi, \phi)=\int_\Omega|\nabla\phi|^2dx-\int_\Omega f^\prime(\cdot,u)\phi^2dx>0,
	\end{equation}
for any $\phi\in H^1_0(\Omega)$, where $f^\prime(\cdot,u)$ denotes the derivative of $f(\cdot,u)$ with respect to $u$. Equivalently, $u$ is a stable solution if the first eigenvalue $\lambda_1(\mathfrak{L},\Omega)$ of the operator $\mathfrak{L}$ in $\Omega$ is positive.
 \end{Definition}
	
	\begin{Definition}\label{def2}
		($x_i$-convex) We say that a domain $\Omega$ is $x_i$-convex, if for any two points $P,Q\in\Omega$ satisfy that, the line segment $\overline{PQ}$ is parallel to the $x_i$ axis, then $\overline{PQ}\subset \Omega.$
 \end{Definition}
	
	\begin{Definition}\label{def3}
 We say that a domain $\Omega$ is simple rotationally symmetric along $x_n$ axis if\\
		(1) it is symmetric with respect to the plane $x_n=0$;\\
		(2) it is invariant under any action of orthogonal group $A\in O(n)$, where $A$ fixes the $x_n$ axis;\\
		(3) it is $x_i$-convex for any $i=1,2,\cdots,n$.
\end{Definition}
	
	By this definition,  we know such domains satisfy that there exists a continuous monotone nonincreasing function $g:\mathbb{R}\rightarrow \mathbb{R}^+$ with $R$ being its first zero, that is, $g(r)>0, 0\leq r<R; g(0)=a, g(R)=0$ such that
	$$\Omega\subset~\mbox{the cylinder of}~\{x=(r,x_n): -a<x_n<a, 0\leq r:=(\sum_{i=1}^{n-1}x_i^2)^{\frac{1}{2}}<R\}.$$
	It means that such domains can be parameterized as
	\begin{equation*}
		\Omega=\{x=(r,x_n):-g(r)<x_n<g(r), 0\leq r=(\sum_{i=1}^{n-1}x_i^2)^{\frac{1}{2}}<R\},
	\end{equation*}
	where $a$ is the maximal semi axis of all domains $\Omega$ in $x_n$ direction. We should mention that for $r<R$ but close to $R$, it is often convenient to express the boundary $\partial\Omega$ as $r=h(|x_n|)$, where $h$ is the inverse of $g$. We say that $\Omega$ is strictly simple if $g^\prime(r)<0$ for any $r\in(0,R)$.

	The main goal of this article is to investigate the distribution of critical points of stable solutions to two kinds of semilinear elliptic equations with Dirichlet boundary condition in higher dimensional domains. The main results of this article are as follows:
	
	\begin{Theorem}\label{thm1.1}
		Let $\Omega\subset \mathbb{R}^n(n\geq 3)$ be a bounded simple rotationally symmetric domain along $x_n$ axis. Let $u$ be a positive stable solution to the following equation
		\begin{equation}\label{1.3}\begin{array}{l}
				\left\{
				\begin{array}{l}
					-\triangle u=f(x,u)~~\mbox{in}~~\Omega,\\
					u=0 ~~\mbox{on}~~\partial\Omega,
				\end{array}
				\right.
		\end{array}\end{equation}
		where $f(x,u)\in C^{1,1}(\Omega,\mathbb{R})$ is a convex function with respect to $u$, $f(x^\prime,x_n,u)=f(x^\prime,-x_n,u)$, $f(x^\prime,x_n,u)=f(|x^\prime|,x_n,u)$, $x^\prime=(x_1,\cdots,x_{n-1})$ and $f$ is decreasing in the $x_i$-variable for $x_i>0(i=1,\cdots, n)$. Then   \\
		(1) $u$ is rotationally symmetric with respect to $x_n$ axis;\\
		(2) $u_{x_n}<0,$ for $x_n>0$ ;\\
		(3) $x_i u_{x_i}<0,$ for $x_i\neq 0,i=1,2,\cdots,n-1$;\\
		(4) $\frac{\partial u(r, x_n)}{\partial r}<0,$ for $r\neq 0$, where $r=|x'|, x'=(x_1,x_2,\cdots,x_{n-1})$;\\
		(5) $u$ has a unique critical point $p$ in $\Omega$ and $p$ is a non-degenerate maximum point.
	\end{Theorem}

	\begin{Theorem}\label{thm1.2}
		Let $\Omega\subset \mathbb{R}^n(n\geq 3)$ be a bounded simple rotationally symmetric domain along $x_n$ axis. Let $u$ be a positive stable solution to the following semilinear elliptic equation
		\begin{equation}\label{1.4}\begin{array}{l}
				\left\{
				\begin{array}{l}
					-\triangle u=f(u)~~\mbox{in}~~\Omega,\\
					u=0 ~~\mbox{on}~~\partial\Omega,
				\end{array}
				\right.
		\end{array}\end{equation}
		where $f(u)\in C^{1}(\mathbb{R})$ is a convex function. Then   \\
		(1) $u$ is rotationally symmetric with respect to $x_n$ axis;\\
		(2) $u_{x_n}<0,$ for $x_n>0$ ;\\
		(3) $x_i u_{x_i}<0,$ for $x_i\neq 0,i=1,2,\cdots,n-1$;\\
		(4) $\frac{\partial u(r, x_n)}{\partial r}<0,$ for $r\neq 0$, where $r=|x'|, x'=(x_1,x_2,\cdots,x_{n-1})$.
	\end{Theorem}
	
	According to the results of Theorem \ref{thm1.2} and the maximum principle, we can easily obtain the following theorem, which provides an affirmative answer to the {\it conjecture A} in a bounded simple rotationally symmetric domain $\Omega\subset \mathbb{R}^n(n\geq 3)$ along $x_n$.
	
	\begin{Theorem}\label{thm1.3}
		Suppose that $\Omega$ is a bounded simple rotationally symmetric domain along $x_n$ axis in $\mathbb{R}^n(n\geq 3)$. Let $u$ be a positive stable solution of (\ref{1.4}), where $f(u)\in C^{1}(\mathbb{R})$ is a convex function. Then $u$ has a unique critical point in $\Omega$. Moreover, this unique critical point is the maximum point of $u$, and it is non-degenerate.
	\end{Theorem}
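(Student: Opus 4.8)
The plan is to extract the statement directly from Theorem \ref{thm1.2}; throughout, $u\in C^2(\Omega)$ by elliptic regularity, and we write $x=(x',x_n)$ with $r=|x'|$. First I would note that, by conclusion (3) of Theorem \ref{thm1.2} (or equally by conclusion (4)), whenever $x'\neq 0$ some $u_{x_i}(x)\neq 0$, so $\nabla u(x)\neq 0$; hence every critical point of $u$ lies on the $x_n$-axis $\{x'=0\}$. On that axis the rotational symmetry in conclusion (1) forces $u_{x_1}=\cdots=u_{x_{n-1}}=0$, so a point $(0,x_n)\in\Omega$ is critical exactly when $u_{x_n}(0,x_n)=0$; combining conclusion (2) with the symmetry of $u$ about $\{x_n=0\}$ established in Theorem \ref{thm1.2} gives $u_{x_n}(0,x_n)<0$ for $x_n>0$, $u_{x_n}(0,x_n)>0$ for $x_n<0$, and $u_{x_n}(0,0)=0$. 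Thus the origin is the unique critical point of $u$. Since $u>0$ in $\Omega$ and $u=0$ on $\partial\Omega$, the maximum of $u$ over $\overline{\Omega}$ is positive and attained at an interior point, which must be a critical point, hence the origin; so the origin is the maximum point of $u$.

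It remains to prove non-degeneracy, i.e. that $D^2u(0)$ is negative definite, and here I would invoke a Hopf-lemma argument on half-domains for the diagonal entries. Differentiating $-\triangle u=f(u)$ in $x_n$ shows $v:=u_{x_n}$ solves the linearized equation $-\triangle v=f'(u)\,v$ in $\Omega$, and by the previous paragraph $v<0$ in $\Omega^+:=\Omega\cap\{x_n>0\}$ while $v=0$ on $\Omega\cap\{x_n=0\}$. Because $v\leq 0$ in $\Omega^+$, the zeroth-order term can be harmlessly replaced by $\min\{f'(u),0\}\leq 0$, so the strong maximum principle and Hopf's boundary point lemma are available on $\Omega^+$; near the origin $\partial\Omega^+$ coincides with the flat hyperplane piece $\{x_n=0\}$, so the interior ball condition holds there, and Hopf's lemma yields $v_{x_n}(0)<0$, that is, $u_{x_nx_n}(0)<0$. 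Running the same argument with $w:=u_{x_1}$ on $\Omega\cap\{x_1>0\}$, using conclusion (3), gives $u_{x_1x_1}(0)<0$, and by rotational symmetry $u_{x_ix_i}(0)<0$ for every $i=1,\dots,n-1$.

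Finally I would kill the off-diagonal entries by symmetry: if $i\neq j$ then at least one index, say $i$, satisfies $i\leq n-1$, and $\Omega$ is invariant under the reflection $x_i\mapsto -x_i$ (Definition 2), a symmetry inherited by $u$ via conclusion (1); differentiating the identity $u(\dots,-x_i,\dots)=u(\dots,x_i,\dots)$ in $x_i$ and $x_j$ and evaluating at the origin gives $u_{x_ix_j}(0)=-u_{x_ix_j}(0)=0$. Hence $D^2u(0)=\operatorname{diag}\!\big(u_{x_1x_1}(0),\dots,u_{x_nx_n}(0)\big)$ with all diagonal entries strictly negative, so $D^2u(0)$ is negative definite and the origin is a non-degenerate maximum; this completes the proof. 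The one point that needs care is the use of Hopf's lemma for the linearized operator, whose zeroth-order coefficient $f'(u)$ has no definite sign — this is precisely why one restricts to the half-domain on which $u_{x_n}$ (resp. $u_{x_i}$) is single-signed, after which the sign of that coefficient becomes irrelevant. Everything else is bookkeeping with the symmetries already furnished by Theorem \ref{thm1.2}.
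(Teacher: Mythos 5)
Your proposal is correct, and for the non-degeneracy part it takes a genuinely different (and more self-contained) route than the paper. The paper obtains Theorem \ref{thm1.3} as a byproduct of the continuity-method proof of Theorem \ref{thm1.2}: uniqueness of the critical point is tracked along the deformation $\Omega_t$, and the negativity of $\partial_{x_nx_n}u(o)$ and $\partial_{x_ix_i}u(o)$ is extracted from Serrin's corner lemma (Lemma \ref{le2.3}) applied to $\partial_{x_n}u$ and $\partial_{x_i}u$ in the quarter-domains $\{x_n>0,\,x_i>0\}$, followed by the same Taylor-expansion/symmetry argument you use to kill the mixed terms. You instead read uniqueness and the location of the maximum directly off the monotonicity conclusions (2)--(4) of Theorem \ref{thm1.2} together with the evenness in $x_n$ (which, strictly speaking, is furnished by Lemmas \ref{le4.1}--\ref{le4.2} rather than being listed among the conclusions of Theorem \ref{thm1.2} -- worth citing explicitly), and you get the diagonal Hessian entries from the classical Hopf boundary point lemma applied to $v=u_{x_n}$ on $\Omega\cap\{x_n>0\}$ and $w=u_{x_1}$ on $\Omega\cap\{x_1>0\}$, where the origin sits on a flat piece of the half-domain boundary interior to $\Omega$, so the interior ball condition is trivially satisfied and no corner analysis is needed. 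Your handling of the sign-indefinite coefficient $f'(u)$ by replacing it with $\min\{f'(u),0\}$ (legitimate because $v\leq 0$, resp. $w\leq 0$, on the relevant half-domain) is exactly the right device and makes the classical Hopf lemma applicable. What your route buys is a cleaner derivation of Theorem \ref{thm1.3} from the statement of Theorem \ref{thm1.2} alone, avoiding both the corner-transversality hypotheses of Lemma \ref{le2.3} and any reference to the deformation parameter; what the paper's route buys is that the same corner argument works uniformly at every stage $t$ of the continuity scheme, which is what it actually needs to close the continuation argument. The only implicit assumptions you use (interior $C^2$ regularity of $u$ and boundedness of $f'(u)$ on $\overline\Omega$) are the same ones the paper uses throughout.
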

	
	\begin{Remark}\label{rem1.4}
By the assumption of function $f(\cdot,s)$, the following two classical cases which fit in this class:\\
i) The case of torsion problem $f(\cdot,s)=1.$\\
ii) The case of linear function $f(\cdot,s)=\widehat{\lambda}s$, where $\widehat{\lambda}<\lambda_1$ and $\lambda_1$ is the first Dirichlet eigenvalue of operator $-\Delta$ in $\Omega$.
	\end{Remark}

	\begin{Remark}\label{rem1.5}
		Notice that if $f(\cdot,u)\geq 0$ for all $u$ in (\ref{1.1}), by the strong maximum principle, then it can be guaranteed that the non-trivial solution must be positive solution.
	\end{Remark}

	For the sake of clarity, we will illustrate the main idea of the proof of Theorem {\ref{thm1.1}} and {\ref{thm1.2}}. Firstly, we consider a special symmetric domain - simple rotationally symmetric domain along $x_n$ axis, and use the positivity of the first eigenvalue of the corresponding linearized operator to study the symmetry of a positive solution. Secondly, we construct a family of rotationally symmetric domains $\Omega_t (t\in[0,1])$ in $\mathbb{R}^n$, where $\Omega_0=B_a$ is the initial domain with radius $a$ and $\Omega_1=\Omega$ is our target domain. Each solution $u$ of (\ref{1.1}) satisfies the all conclusions of Theorem {\ref{thm1.1}} and {\ref{thm1.2}} in initial domain $\Omega_0$. In the end, we will show that $u$ is a Morse function which has still exactly one critical point as the domains deform by continuity method and a variety of maximum principles. It means that $u$ is stable in $\Omega_t$, which is a crucial property. The advantages of our method are as follows:\\
	\emph{(1) It can deal with a larger class of higher dimensional domains.\\
		(2) The domain $\Omega$ that we consider simply requires to be $x_i$-convex and not convex.\\
		(3) It can be extended to more general uniformly elliptic equations, e.g., quasilinear uniformly elliptic equations.}
	
	The rest of this article is written as follows. In Section 2, we introduce Serrin's comparison principle, BV theorem, Serrin's Lemma - a generalized version of Hopf boundary lemma at corners of boundary, and other basic result needed in the proof of main results. Among others,  we will prove Theorem \ref{thm1.1} in Section 3, Theorems \ref{thm1.2} and \ref{thm1.3} in Section 4.

	\section{Preliminaries}
	~~~~In this section, firstly we will recall some facts about maximum principle. Suppose that $\Omega$ is a bounded domain in $\mathbb{R}^{n}.$ Let $L$ be a uniformly second order elliptic operator, be defined as
	\begin{equation}\label{2.1}
		Lu=\sum_{i,j=1}^n a_{ij}(x)u_{x_ix_j}+\sum_{i=1}^n b_{i}(x)u_{x_i}+c(x)u,
	\end{equation}
	where $a_{ij}=a_{ji}, a_{ij}\in C(\overline{\Omega}), b_i,c\in L^{\infty}(\Omega),$ and $\sum_{i,j=1}^n a_{ij}(x)\xi_i\xi_j\geq \lambda|\xi|^2$ for any $x\in \Omega, \xi\in \mathbb{R}^n$ and some positive constant $\lambda.$ Now let $\lambda_1(L,\Omega)$ be the first eigenvalue of the above elliptic operator $L$ in $\Omega$, given by
	$$\lambda_1(L,\Omega)=\sup \{\lambda:\exists u>0~\mbox{such~that}~(L+\lambda)u\leq 0\},$$
	where $u\in W^{2,n}_{loc}\cap C(\overline{\Omega}).$
	
	We will need the following generalized versions of maximum principle, BV theorem, and Hopf lemma at the corners of boundary.
	\begin{Lemma}\label{le2.1}
		(Serrin's comparison principle, see \cite{Serrin}) Assume that $u\in C^2(\Omega)\cap C(\overline{\Omega})$ satisfies $Lu\geq 0,$ where operator $L$ is defined as (\ref{2.1}). If $u\leq 0$ in $\Omega$, then either $u<0$ in $\Omega$ or $u\equiv 0$ in $\Omega$.
	\end{Lemma}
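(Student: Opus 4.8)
\medskip\noindent\textbf{Proof (sketch).}
I would deduce Lemma~\ref{le2.1} by combining the generalized weak maximum principle with Hopf's strong maximum principle, the only genuine difficulty being that the zeroth-order coefficient $c(x)$ is not assumed to have a sign. First I reduce the dichotomy to the weak inequality: suppose we already know $u\le 0$ throughout $\Omega$. Writing $c=c^{+}-c^{-}$ with $c^{\pm}\ge 0$ and letting $L_{0}$ denote $L$ with its zeroth-order term deleted, we have in $\Omega$
\begin{equation*}
	(L_{0}-c^{-})u \;=\; Lu-c^{+}u \;\ge\; -\,c^{+}u \;\ge\; 0 ,
\end{equation*}
because $u\le 0$ and $c^{+}\ge 0$. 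The operator $L_{0}-c^{-}$ has nonpositive zeroth-order coefficient, so the classical strong maximum principle applies to $u$: since $\sup_{\overline\Omega}u=0$, either this value is attained at an interior point, forcing $u\equiv 0$ in $\Omega$, or $u<0$ in $\Omega$. This is exactly the claimed alternative.

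It remains to prove $u\le 0$ in $\Omega$. This step is false for a general sign-changing $c$ (take a first Dirichlet eigenfunction of $\triangle+\lambda_{1}$), and the ingredient that rescues it is the nonnegativity of the principal eigenvalue $\lambda_{1}(L,\Omega)$ defined just above the lemma — a property that is automatic wherever this lemma gets used, namely on the thin strips and symmetric caps of the moving-plane argument, where smallness of the domain forces $\lambda_{1}(L,\Omega)>0$. Concretely I would take $w>0$ on $\overline\Omega$ with $Lw\le 0$ (its existence is exactly what $\lambda_{1}(L,\Omega)\ge 0$ provides), set $v:=u/w$, and compute
\begin{equation*}
	\sum_{i,j=1}^{n}a_{ij}v_{x_{i}x_{j}}+\sum_{i=1}^{n}\tilde b_{i}v_{x_{i}}+\tilde c\,v \;=\; \frac{1}{w}\,Lu \;\ge\; 0 ,\qquad \tilde c=\frac{Lw}{w}\le 0 .
\end{equation*}
Since $v\le 0$ on $\partial\Omega$ (as $w>0$ there) and the transformed operator has nonpositive zeroth-order term, the classical weak maximum principle yields $v\le 0$ in $\Omega$, hence $u=wv\le 0$, and the reduction above concludes the proof.

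I expect the main obstacle to be precisely this positive-supersolution reduction: exhibiting $w$ (equivalently, making honest use of $\lambda_{1}(L,\Omega)\ge 0$), and checking that the quotient $v=u/w$ simultaneously inherits the right elliptic differential inequality and the right boundary sign; one must also keep the argument inside the natural regularity class, so that the maximum principles are invoked in the form valid for $W^{2,n}_{\mathrm{loc}}\cap C(\overline\Omega)$ functions. After that, everything is the textbook strong maximum principle. Finally I would note that in the later sections $c$ will be $f_{u}(\cdot,u)$ (respectively $f'(u)$) evaluated along a reflected configuration, so the eigenvalue positivity needed to invoke Lemma~\ref{le2.1} will be established there, and the lemma itself is then used as a black box.
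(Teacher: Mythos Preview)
The paper does not prove Lemma~\ref{le2.1}; it merely records it as a citation to Serrin. So there is no ``paper's own proof'' to compare against, and your sketch already goes further than the paper does.

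Your analysis is essentially sound, and in fact you have put your finger on something the paper glosses over: as literally stated (only $u\le 0$ on $\partial\Omega$, arbitrary sign of $c$), the conclusion is false --- your eigenfunction counterexample is the right one. Your first paragraph, however, is a clean and correct proof of the \emph{strong} version: if one already knows $u\le 0$ throughout $\Omega$, then the $c=c^{+}-c^{-}$ decomposition reduces to the classical strong maximum principle and yields the dichotomy $u<0$ or $u\equiv 0$. That is exactly how the paper actually \emph{uses} the lemma: in every invocation (e.g.\ passing from $V_{\varepsilon}^{+}\ge 0$ to $V_{\varepsilon}^{+}>0$ in Lemma~\ref{le3.1}, or from $w_{\lambda_0-\varepsilon}\le 0$ to $w_{\lambda_0-\varepsilon}<0$ in Step~1), the interior sign $u\le 0$ has already been secured by a separate argument (a case assumption, or the small-volume maximum principle), and Lemma~\ref{le2.1} is only asked to upgrade $\le 0$ to $<0$.

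Your second paragraph --- the $v=u/w$ quotient argument under $\lambda_1(L,\Omega)>0$ --- is the standard and correct route to recover the weak conclusion from the boundary data alone; it is effectively a proof of Lemma~\ref{le2.2} (the Berestycki--Nirenberg--Varadhan characterization). Two small remarks: you need strict positivity $\lambda_1>0$, not mere nonnegativity, for the weak maximum principle (at $\lambda_1=0$ the principal eigenfunction again violates the conclusion); and you should ensure $w>0$ up to $\overline\Omega$ so that $v$ inherits the boundary sign, which may require shrinking the domain slightly or invoking the refined BNV construction. But since the paper never actually relies on this harder half, your first paragraph already suffices for every application made of Lemma~\ref{le2.1} here.
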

	
	\begin{Lemma}\label{le2.2}
		(BV Theorem, see \cite{BerestyckiVaradhan}) The maximum principle holds in $\Omega$ if and only if $\lambda_1(L,\Omega)>0.$
	\end{Lemma}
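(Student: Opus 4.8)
The statement is an equivalence between ``the maximum principle holds in $\Omega$'' — meaning every $u\in W^{2,n}_{loc}(\Omega)\cap C(\overline{\Omega})$ with $Lu\ge 0$ in $\Omega$ and $u\le 0$ on $\partial\Omega$ satisfies $u\le 0$ in $\Omega$ — and $\lambda_1(L,\Omega)>0$, so I would prove the two implications separately. Both rest on the same device, conjugation by a positive function: if $\phi>0$ in $\Omega$, then setting $u=\phi w$ and dividing $Lu$ by $\phi$ produces an elliptic operator $L^{\phi}w=\sum_{i,j}a_{ij}w_{x_ix_j}+\sum_i\tilde b_i w_{x_i}+(\phi^{-1}L\phi)\,w$ whose zeroth order coefficient is $\phi^{-1}L\phi$. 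If $\phi$ can be chosen positive with $L\phi\le 0$, this coefficient is nonpositive, i.e.\ $L^{\phi}$ is of the type for which the classical maximum principle holds; so the whole issue is the \emph{existence} of such a $\phi$, which is exactly what $\lambda_1(L,\Omega)>0$ encodes.

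\emph{The implication $\lambda_1(L,\Omega)>0\Rightarrow$ maximum principle.} Pick $\lambda\in\big(0,\lambda_1(L,\Omega)\big)$; by the definition of $\lambda_1$ there is $\phi>0$ with $(L+\lambda)\phi\le 0$, i.e.\ $L\phi\le-\lambda\phi<0$. Suppose $Lu\ge 0$ in $\Omega$, $u\le 0$ on $\partial\Omega$, and, for contradiction, $M:=\sup_\Omega(u/\phi)>0$. Near $\partial\Omega$ one has $u\le 0<\phi$, so $u/\phi\le 0$ there, and hence the positive number $M$ is attained at an interior point $x_0$. At $x_0$ the function $v:=u-M\phi$ has an interior maximum equal to $0$, so $\nabla v(x_0)=0$ and $D^2v(x_0)\le 0$, whence ellipticity gives $Lv(x_0)=\sum_{i,j}a_{ij}(x_0)v_{x_ix_j}(x_0)\le 0$. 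On the other hand $Lv=Lu-ML\phi\ge-ML\phi\ge M\lambda\phi>0$, a contradiction; therefore $u\le 0$ in $\Omega$. (Equivalently, $w:=u/\phi$ satisfies $L^{\phi}w\ge 0$ with zeroth order coefficient $\phi^{-1}L\phi\le-\lambda<0$, and the classical maximum principle for $L^{\phi}$ gives $w\le 0$.)

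\emph{The implication maximum principle $\Rightarrow\lambda_1(L,\Omega)>0$.} It is enough to produce one $\phi>0$ in $\Omega$ and one $\varepsilon>0$ with $(L+\varepsilon)\phi\le 0$, since then $\varepsilon$ belongs to the set defining $\lambda_1$ and $\lambda_1(L,\Omega)\ge\varepsilon>0$. I would get $\phi$ by solving the auxiliary problem $Lv=-1$ in $\Omega$, $v=0$ on $\partial\Omega$. Granting the maximum principle, this is uniquely solvable in $W^{2,p}_{loc}(\Omega)\cap C(\overline{\Omega})$ with a bound $0\le v\le C$: existence by the continuity method (or Perron's method), using the maximum principle both to absorb the zeroth order term and to supply the a priori estimate, and $v\ge 0$ plus uniqueness by applying the maximum principle to $\pm v$. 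An interior-minimum argument as above then improves $v\ge 0$ to $v>0$ in $\Omega$ (if $v(x_0)=0$ at an interior point, then $\nabla v(x_0)=0$ and $D^2v(x_0)\ge 0$, forcing $Lv(x_0)\ge 0$, which contradicts $Lv=-1$). Finally $\varepsilon:=1/\sup_\Omega v>0$ satisfies $Lv+\varepsilon v=-1+\varepsilon v\le 0$, so $\phi:=v$ is the required positive supersolution.

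\emph{The main obstacle} is concentrated in the last implication, namely in the solvability of $Lv=-1$, $v|_{\partial\Omega}=0$ from the maximum principle alone — equivalently, in constructing the positive principal eigenfunction $\phi_1$ with $L\phi_1+\lambda_1\phi_1=0$, $\phi_1>0$ — with no sign condition on $c$ and $\partial\Omega$ only assumed bounded. For smooth or Lipschitz $\Omega$ this is the classical Krein--Rutman argument applied to the compact positive resolvent $(k-L)^{-1}$ for $k$ large; for a general bounded $\Omega$ one exhausts $\Omega$ by smooth subdomains $\Omega_j\uparrow\Omega$, uses monotonicity of $\lambda_1$ under inclusion, and passes to the limit with a Harnack normalization. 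This is precisely the content of \cite{BerestyckiVaradhan}, so we quote the lemma rather than reprove it.
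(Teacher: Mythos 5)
The paper offers no proof of this lemma at all: it is quoted verbatim from \cite{BerestyckiVaradhan}, so your closing decision to cite rather than reprove the hard construction is exactly what the authors do. The problem lies in the part you do present as a proof. In the implication $\lambda_1(L,\Omega)>0\Rightarrow$ maximum principle, the step ``near $\partial\Omega$ one has $u\le 0<\phi$, so $u/\phi\le 0$ there, hence $M=\sup_\Omega(u/\phi)$ is attained at an interior point'' is not justified: $u\le 0$ is assumed only on $\partial\Omega$ itself, not in a neighbourhood of it, and the test function $\phi$ in the definition of $\lambda_1$ is only required to be positive in $\Omega$ --- it may vanish on $\partial\Omega$ (the principal eigenfunction itself does). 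Consequently $u/\phi$ can be unbounded near the boundary, or bounded with its supremum not attained, and the interior-maximum argument collapses. The same difficulty resurfaces in your parenthetical variant: the conjugated operator $L^{\phi}$ has drift coefficients involving $\nabla\phi/\phi$, which can blow up at $\partial\Omega$, so the ``classical'' maximum principle cannot simply be invoked for $w=u/\phi$. This is precisely why the proof of this direction in \cite{BerestyckiVaradhan} is not a two-line ratio argument but proceeds through Alexandrov--Bakelman--Pucci-type estimates and exhaustion by subdomains; your argument is complete only under the extra hypothesis $\inf_\Omega\phi>0$, which the definition of $\lambda_1$ does not provide.

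The converse direction has a similar gap, which you at least flag: solving $Lv=-1$ with $v\in C(\overline{\Omega})$ and $v=0$ on $\partial\Omega$ in a merely bounded domain (possibly with irregular boundary points) and with $c$ of arbitrary sign is not a routine application of Perron's method or the continuity method --- continuous attainment of the boundary data can genuinely fail, which is why \cite{BerestyckiVaradhan} formulate everything through one-sided boundary limits and their refined maximum principle. Since you ultimately defer both delicate points to the reference, the honest assessment is: you correctly isolate the mechanism (a positive supersolution of $L+\lambda$ turns $L$ into an operator with negative zeroth-order coefficient) and you match the paper in treating the full statement as a quoted result, but the portion written out as a proof has a concrete hole at the boundary behaviour of $\phi$, and in the stated generality the lemma should indeed simply be cited.
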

	
	\begin{Lemma}\label{le2.3}
		(Serrin's Lemma, see \cite[Lemma S]{GidasNi} and \cite[Lemma 2]{Serrin})
		Let $\Omega$ be a domain in $\mathbb{R}^n$ with the origin $o$ on its boundary. Suppose that near $o$ the boundary consists of two transversally intersecting $C^2$ hypersurfaces $\varrho=0$ and $\sigma=0.$ Assume that $\varrho, \sigma<0$ in $\Omega.$ Let $u$ be a function in $C^2(\overline{\Omega})$, with $u<0$ in $\Omega, u(o)=0$, satisfying the differential inequality $Lu\geq 0$, where operator $L$ is defined as (\ref{2.1}). Suppose that
		\begin{equation}\label{2.2}
			\sum_{i,j=1}^n a_{ij}(x)\varrho_{x_i}\sigma_{x_j}\geq 0~\mbox{at}~o.
		\end{equation}
		If this is zero, then assume furthermore that $a_{ij}\in C^2$ in $\overline{\Omega}$ near $o$ with
		\begin{equation}\label{2.3}
			D(\sum_{i,j=1}^n a_{ij}(x)\varrho_{x_i}\sigma_{x_j})=0~\mbox{at}~o,
		\end{equation}
		for any first order derivative $D$ at $o$ tangent to the submanifold $\{\varrho=0\}\cap\{\sigma=0\}.$ Then, for any direction $s$ at $o$ which enters $\Omega$ transversally to each hypersurface,
		\begin{equation}\label{2.4}\begin{array}{l}
				\left\{
				\begin{array}{l}
					\partial_su<0~\mbox{at}~o~\mbox{in case of strict inequality in (\ref{2.2})},\\
					\partial_su<0~\mbox{or}~\partial_{ss}u<0~\mbox{at}~o~\mbox{in case of equality in (\ref{2.2})}.
				\end{array}
				\right.
		\end{array}\end{equation}
	\end{Lemma}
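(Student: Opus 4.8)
\noindent\emph{Sketch of proof.} This is Serrin's corner version of the Hopf boundary lemma, and in the sequel it is invoked as a black box, so I only describe how one would establish it. The plan is to combine a few elementary reductions and an almost elementary case analysis with a single delicate barrier estimate, reserved for the most degenerate situation.

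\emph{Reductions and set-up.} Everything is local, so first replace $\Omega$ by $\Omega\cap B_\delta(o)$. Since $u<0$ in $\Omega$, the standard device of replacing the zero-order coefficient of $L$ by its negative part keeps $Lu\ge0$ and makes Lemma~\ref{le2.1} directly applicable to comparison functions. A linear change of variables then normalizes $(a_{ij}(o))$ to the identity; the expression $\sum a_{ij}\varrho_{x_i}\sigma_{x_j}$ at $o$ is invariant under this change and becomes $\nabla\varrho(o)\cdot\nabla\sigma(o)$, so hypothesis \eqref{2.2} says exactly that the cone of inward directions $W:=\{v:\ v\cdot\nabla\varrho(o)<0,\ v\cdot\nabla\sigma(o)<0\}$ — which is precisely the set of directions entering $\Omega$ transversally to both sheets — has opening at least $\pi/2$, with a right angle corresponding to equality in \eqref{2.2}.

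\emph{The elementary part.} From $u\le0$ near $o$ and $u(o)=0$ one gets $\partial_s u(o)\le0$ for every inward $s$; and if $\partial_{s_0}u(o)=0$ for some $s_0$ in the open cone $W$, then perturbing $s_0$ by $\pm\delta\nabla u(o)$ (still in $W$) forces $\nabla u(o)=0$. Hence, as soon as $\nabla u(o)\neq0$, we get $\partial_s u(o)<0$ for every $s\in W$, which is the first alternative of \eqref{2.4}. So assume $\nabla u(o)=0$ and set $Q:=D^2u(o)$; the second-order Taylor expansion together with $u\le0$ gives $Q(v)\le0$ for all $v\in\overline W$. Two cases remain. (i) If strict inequality holds in \eqref{2.2}, decompose $\mathbb{R}^n$ orthogonally as $\Pi\oplus T_o$ with $\Pi=\mathrm{span}\{\nabla\varrho(o),\nabla\sigma(o)\}$ and $T_o$ the tangent space of the edge; then $Q|_\Pi\le0$ on a planar cone of opening $>\pi/2$ forces $\mathrm{tr}(Q|_\Pi)<0$, while $\mathrm{tr}(Q|_{T_o})\le0$, so $Lu(o)=\mathrm{tr}(Q)<0$, contradicting $Lu(o)\ge0$; thus in fact $\nabla u(o)\neq0$ always in the strict case, and we are done by the previous paragraph. (ii) If equality holds in \eqref{2.2} and $Q\not\equiv0$, then — a quadratic form vanishing on the open set $W$ vanishes identically — there is $s\in W$ with $Q(s)<0$, i.e. $\partial_{ss}u(o)<0$, the second alternative of \eqref{2.4}.

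\emph{The hard part, and the main obstacle.} The only surviving case is equality in \eqref{2.2} together with $\nabla u(o)=0$ and $D^2u(o)=0$, i.e. $u=o(|x-o|^2)$. Here one must build a genuine barrier, starting from $w_0=\varrho\sigma$ — positive in $\Omega$ near $o$, vanishing on both sheets, with $\nabla w_0(o)=0$ and $Lw_0(o)=2\,\nabla\varrho(o)\cdot\nabla\sigma(o)=0$ by a one-line Taylor computation — and correcting it by cubic terms and an exponential factor so that $Lw\ge0$ throughout $\Omega\cap B_\delta(o)$; this is precisely where the vanishing of the edge-tangential derivatives in \eqref{2.3} is consumed, since it controls the next Taylor coefficient of $Lw_0$. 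Lemma~\ref{le2.1} then forces $u\le-\varepsilon w$ for some $\varepsilon>0$, and since $w\gtrsim|x-o|^2$ near $o$ this contradicts $u=o(|x-o|^2)$, so this case cannot occur. I expect the barrier construction — simultaneously controlling the sign of $Lw$ and the boundary inequality near the non-smooth edge $\{\varrho=0\}\cap\{\sigma=0\}$ under only $a_{ij}\in C^2$ and $b_i,c\in L^\infty$ — to be the genuine difficulty; everything else is Taylor bookkeeping together with the elementary linear algebra above.
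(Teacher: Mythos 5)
You should first note that the paper does not prove Lemma \ref{le2.3} at all: it is quoted as a black box from Serrin and Gidas--Ni--Nirenberg, so there is no in-paper argument to compare with, and in those sources the heart of the matter is precisely the comparison-function (barrier) construction at the corner. Your outline explicitly defers that construction -- building a correction of $w_0=\varrho\sigma$ that is a usable barrier on a corner region, including the boundary inequality on the spherical part of that region, where both $u$ and the barrier degenerate near the edge $\{\varrho=0\}\cap\{\sigma=0\}$, and this is exactly where hypothesis \eqref{2.3} is consumed. Since that step is the substance of Serrin's lemma, what you have written is a plan for a proof rather than a proof.

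Beyond the admitted omission, two of the steps you did write out do not hold as stated. First, in the strict case the claim that $Q|_\Pi\le 0$ on a planar cone of opening greater than $\pi/2$ forces $\mathrm{tr}(Q|_\Pi)<0$ fails exactly when $Q|_\Pi\equiv 0$; in that event one only gets $\mathrm{tr}(Q)\le 0$, there is no contradiction with $Lu(o)\ge 0$, and combining $Q\le 0$ on $\overline W$ with $\mathrm{tr}(Q)\ge 0$ shows the surviving situation is again $\nabla u(o)=0$, $D^2u(o)=0$. So ``$\nabla u(o)\neq 0$ always in the strict case'' is not established by the elementary part, and the degenerate case $u=o(|x-o|^2)$ must be excluded by the barrier in the strict case as well (where it is easier, since $Lw_0(o)=2\sum_{i,j}a_{ij}\varrho_{x_i}\sigma_{x_j}>0$). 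Second, the conclusion of the lemma is universal in $s$: in the equality case one must show $\partial_s u<0$ or $\partial_{ss}u<0$ for \emph{every} transversal direction, whereas your case (ii) only produces \emph{some} $s\in W$ with $Q(s)<0$. The universal statement can be recovered elementarily, but only by invoking the equation again: $Q\le 0$ on $\overline W$ (hence, by evenness, on $\overline W\cup(-\overline W)$, which contains $\pm\nabla\varrho(o)$, $\pm\nabla\sigma(o)$ and all of $T_o$) together with $\mathrm{tr}(Q)\ge 0$ forces every entry of $Q$ in an adapted orthonormal basis to vanish except the single off-diagonal entry coupling $\nabla\varrho(o)$ and $\nabla\sigma(o)$, which is $\le 0$; thus either $Q\equiv 0$ (the barrier case) or $Q(s)<0$ for all $s\in W$. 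This second use of $Lu(o)\ge 0$ is absent from your sketch. (A minor further point: since $b_i,c\in L^\infty$ only, ``$Lu(o)\ge 0$'' must itself be justified by a limit along interior points, which works because $u(o)=0$ and $\nabla u(o)=0$ in the cases where you use it.)
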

	
	Note that conditions (\ref{2.2}) and (\ref{2.3}) are invariant under change of coordinates and the choices of the particular functions $\varrho$ and $\sigma$ representing the bounding hypersurfaces.
	
	To make it easier to understand, we need to give the following definition.
	
	\begin{Definition}\label{def4}
(Morse function) We say that a function $u$ is Morse function, if all the critical points of $u$ are non-degenerate, namely, $\det|\partial_{ij}u|\neq 0$ at all its critical points.
   \end{Definition}

	The main difference of our method is that we find that the stability of critical points for Morse function $f$, which ensures that the sign of derivative with respect to $x_i(i=1,2,\cdots,n)$ in the corresponding domain remains the same.
	Therefore we need the following well known Morse theorem results. In such smooth maps setting, stability means that a small perturbation of $f$ keeps the total number, location and type of the critical points preserved. See \cite{Anosov,Milnor} for details.

	\begin{Lemma}\label{le2.4} (see \cite{Anosov,Milnor})\\
		(1) The stable smooth maps $f: M^n\rightarrow R$ from a manifold $M^n$ to real line form an everywhere dense
		set in the space of all smooth maps.\\
		(2) For a map $f$ to be stable it is necessary and sufficient that the following two conditions are satisfied: (a)
		All critical points of $f$ are nondegenerate; (b) All the critical values are distinct.\\
		(3) A map $f: M^n\rightarrow R$ is stable at a point $x_0$ if and only if there are neighborhoods of $x_0\in M^n$ that $f$ can be
		expressed in the following forms in a local coordinate
		\begin{equation}\label{2.4}\begin{array}{l}
				\left\{
				\begin{array}{l}
					f(x)=x_1,\\
					\mbox{or}\\
					f(x)=x_1^2+x_2^2+\cdots +x_k^2-x_{k+1}^2-\cdots -x_n^2.
				\end{array}
				\right.
		\end{array}\end{equation}
	\end{Lemma}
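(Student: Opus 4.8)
The plan is to treat this as a package of classical facts from Morse theory and the theory of stable mappings, organised around the local normal-form statement (3), from which (1) and (2) follow by transversality together with a global gluing construction. I would dispose of (3) first. At a point $x_0$ with $df_{x_0}\neq 0$ the submersion (constant rank) theorem gives a chart in which $f$ equals the coordinate function $x_1$. At a non-degenerate critical point I would invoke the Morse Lemma: pick a chart with $x_0=0$, $f(0)=0$; Hadamard's lemma writes $f(x)=\sum_{i,j}h_{ij}(x)x_ix_j$ with $h_{ij}$ smooth and $\bigl(h_{ij}(0)\bigr)=\tfrac{1}{2}\bigl(\partial_{ij}f(0)\bigr)$ non-singular; an inductive smooth analogue of completing the square (using the implicit function theorem at each stage) then yields coordinates $y$ in which $f=\pm y_1^2\pm\cdots\pm y_n^2$, with the number of minus signs equal to the Morse index. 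This is exactly the normal form displayed in the statement.

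For (1), I would embed $M^n$ properly into some $\mathbb{R}^N$ and perturb by a linear functional, $f_a(x)=f(x)+a\cdot x$ for $a\in\mathbb{R}^N$. Reading off the critical-point equation in local coordinates, $f_a$ fails to be a Morse function precisely when $a$ lies in the critical-value set of a suitable auxiliary smooth map built from the embedding, so by Sard's theorem $f_a$ is Morse for almost every $a$, and $f_a\to f$ as $a\to 0$. To reach genuine stability --- which on a compact $M$ additionally demands that all critical values be distinct --- I would further perturb by $\sum_k\varepsilon_k\chi_k$, where $\chi_k$ is a bump function equal to $1$ near the $k$-th critical point (there are finitely many, and they are isolated) and $0$ near the others; for generic small $\varepsilon_k$ the function stays Morse and its critical values become pairwise distinct. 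This gives the density asserted in (1), and makes transparent why conditions (a) and (b) of (2) are necessary for the perturbation picture to be robust.

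The real content of (2) is the converse: that a Morse function with pairwise distinct critical values is stable. Here I would check Thom--Mather infinitesimal stability, or argue by hand --- for every $g$ that is $C^\infty$-close to $f$, build diffeomorphisms of $M$ and of $\mathbb{R}$ conjugating $g$ to $f$, using the parametrised Morse Lemma near each critical point (coordinates depending smoothly on $g$) and the flow of a gradient-like vector field away from the critical points, the distinctness of the critical values being exactly what permits these local models to be patched along disjoint level intervals. For the necessity of (a) and (b), one produces arbitrarily small perturbations that alter the configuration: the one-variable model $x_1^3+\varepsilon x_1$ creates or destroys a pair of critical points according to the sign of $\varepsilon$, and two coinciding critical values can be separated in either order into non-equivalent pictures; both obstruct stability. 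I expect the main obstacle to be precisely this sufficiency half of (2): assembling the local Morse charts into global conjugating diffeomorphisms with the required uniformity in $g$ is the technical heart, and it is here that one must lean on the Thom--Mather theory or on the detailed treatments in \cite{Milnor,Anosov}.
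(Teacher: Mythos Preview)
The paper does not prove this lemma at all: it is stated as a quotation of classical Morse-theory facts, with the proof delegated entirely to the references \cite{Anosov,Milnor}. There is therefore no ``paper's own proof'' against which to compare your proposal.

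That said, your sketch is a reasonable outline of how these results are established in the cited sources. The normal-form argument for (3) via the submersion theorem and the Morse Lemma is exactly the standard route; the density argument for (1) via linear perturbations and Sard's theorem is the textbook approach; and you correctly identify that the substantive work in (2) is the sufficiency direction, where one must globalise the local Morse charts into a conjugating diffeomorphism. Your honesty about leaning on Thom--Mather or the detailed treatments in \cite{Milnor,Anosov} for that last step is appropriate --- that is indeed where the technical weight lies, and the paper itself makes no attempt to reproduce it.
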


	\section{Proof of Theorem \ref{thm1.1} }
	~~~~In this section, we will use a simple argument to study the symmetry of a positive stable solution $u$ of (\ref{1.3}) in $\Omega$, this idea can be seen in \cite{Pacella}, for the sake of completeness, in our setting, we will give the following complete proof. Now we define the following hyperplane $T_n$, domains $\Omega_n^{-}$ and $\Omega_n^{+}$:
	\begin{equation}\label{3.1}\begin{array}{l}
			\left\{
			\begin{array}{l}
				T_n=\{x=(x_1,x_2,\cdots,x_n)\in\Omega,~x_n=0\},\\
				\Omega_n^{-}=\{x\in\Omega,~x_n<0\},\\
				\Omega_n^{+}=\{x\in\Omega,~x_n>0\},
			\end{array}
			\right.
	\end{array}\end{equation}
	where $\Omega$ is a simple rotationally symmetric domain along $x_n$ axis.
	
	Next, we will prove the symmetry of a positive stable solution to (\ref{1.3}) in $\Omega,$ where $\Omega$ is a simple rotationally symmetric domain along $x_n$ axis.  Let $u$ be a stable solution of (\ref{1.3}). We denote by $\mathfrak{L}$ and $\lambda_1(\mathfrak{L},D)$ the linearized operator at $u$ and the first eigenvalue of $\mathfrak{L}$ in a subdomain $D\subset\Omega$ respectively, namely
	$$\mathfrak{L}=-\triangle -f'(x,u),$$
	where $f'$ denotes the derivative of $f(x,u)$ with respect to $u.$
\begin{Lemma}\label{le3.1}
		Let $u$ be a positive stable solution of (\ref{1.3}). If $f(x,u)$ is a convex function with respect to $u$, then this solution $u$ is unique.
	\end{Lemma}
	\begin{proof} We set up the usual contradiction argument. Without loss generality, we suppose that $u$ and $v$ are two positive stable  solutions of (\ref{1.3}) in $\Omega\in\mathbb{R}^n(n\geq 3)$, then we have that
\begin{equation}\label{3.2}
\begin{split}
-\Delta u=f(x,u),
\end{split}
\end{equation}
and
\begin{equation}\label{3.3}
\begin{split}
-\Delta v=f(x,v).
\end{split}
\end{equation}

Assume that $w_1=u-v,$  by (\ref{3.2}) minus (\ref{3.3}), we obtain that
\begin{equation*}\label{}
			\left\{
			\begin{array}{l}
				-\Delta w_1=f(x,u)-f(x,v)\geq f^\prime(x,v)w_1~\mbox{in}~\Omega, \\
				w_1=0~\mbox{on}~\partial\Omega.
			\end{array}
			\right.
		\end{equation*}
That is
\begin{equation}\label{3.4}
			\left\{
			\begin{array}{l}
				\mathfrak{L}w_1:=-\Delta w_1-f^\prime(x,v)w_1\geq 0~\mbox{in}~\Omega, \\
				w_1=0~\mbox{on}~\partial\Omega.
			\end{array}
			\right.
		\end{equation}

In addition, denote by $w_2=v-u,$  by (\ref{3.3}) minus (\ref{3.2}), we can also have
\begin{equation*}\label{}
			\left\{
			\begin{array}{l}
				-\Delta w_2=f(x,v)-f(x,u)\geq f^\prime(x,u)w_2~\mbox{in}~\Omega, \\
				w_2=0~\mbox{on}~\partial\Omega.
			\end{array}
			\right.
		\end{equation*}
That is
\begin{equation}\label{3.5}
			\left\{
			\begin{array}{l}
				\mathfrak{L}w_2:=-\Delta w_2-f^\prime(x,u)w_2\geq 0~\mbox{in}~\Omega, \\
				w_2=0~\mbox{on}~\partial\Omega.
			\end{array}
			\right.
		\end{equation}
By the assumption of stable solution to (\ref{1.3})  and Definition 1, we know that the first eigenvalue $\lambda_1(\mathfrak{L},\Omega)$ of the operator $\mathfrak{L}$ in $\Omega$ is positive. Lemma \ref{le2.2} means that $w_1\geq 0$ and $w_2\geq 0$ in $\Omega$, namely, $u=v$ in $\Omega$.  This completes the proof of Lemma \ref{le3.1}.\end{proof}

	\begin{Lemma}\label{le3.2}
		Let $u$ be a stable solution of (\ref{1.3}). If $f(x,u)$ is convex with respect to $u$, then $u$ is symmetric with respect to the hyperplane $T_n$, i.e. $u(x_1,x_2,\cdots,-x_n)=u(x_1,x_2,\cdots,x_n)$.
	\end{Lemma}
	\begin{proof} Due to the hypothesis of stable solution to (\ref{1.3}), Definition 1 and the monotonicity of the first eigenvalues with respect to the domains, we have that $\lambda_1(\mathfrak{L},\Omega)>0$, $\lambda_1(\mathfrak{L},\Omega_n^{+})>0$ and $\lambda_1(\mathfrak{L},\Omega_n^{-})>0$. We suppose that $v^+$ and $v^-$ are the reflected functions of $u$ in the domains $\Omega_n^+$ and $\Omega_n^-$, respectively,
		\begin{equation*}\label{}
			\left\{
			\begin{array}{l}
				v^+(x)=u(x_1,x_2,\cdots,-x_n) ~~\mbox{for}~x\in\Omega_n^+,\\
				v^-(x)=u(x_1,x_2,\cdots,-x_n) ~~\mbox{for}~x\in\Omega_n^-.
			\end{array}
			\right.
		\end{equation*}
		Since $f(x,u)$ is convex with respect to $u$, we have
		\begin{equation*}\label{}
			\left\{
			\begin{array}{l}
				f(x,v^+(x))-f(x,u(x))\geq f'(x,u(x))(v^+(x)-u(x)) ~~\mbox{in}~x\in\Omega_n^+,\\
				f(x,v^-(x))-f(x,u(x))\geq f'(x,u(x))(v^-(x)-u(x))~~\mbox{in}~x\in\Omega_n^-.
			\end{array}
			\right.
		\end{equation*}
		 According to (\ref{1.3}), defining the functions $V^+=v^+-u$ and $V^-=v^--u$, then we have
		\begin{equation}\label{3.6}\begin{array}{l}
				\left\{
				\begin{array}{l}
					-\triangle V^+ -f'(x,u(x))V^+\geq 0~~\mbox{in}~\Omega_n^+,\\
					V^+=0~~\mbox{on}~\partial\Omega_n^+,
				\end{array}
				\right.
		\end{array}\end{equation}
		and
		\begin{equation}\label{3.7}\begin{array}{l}
				\left\{
				\begin{array}{l}
					-\triangle V^--f'(x,u(x))V^-\geq 0~~\mbox{in}~\Omega_n^-,\\
					V^-=0~~\mbox{on}~\partial\Omega_n^-.
				\end{array}
				\right.
		\end{array}\end{equation}
		
       Next we divide the following proof into two cases.
		
		(i) Case 1: If $V^+$ and $V^-$ are both nonnegative in $\Omega_n^+$ and $\Omega_n^-$ respectively, by the definitions of $\Omega_n^+$ and $\Omega_n^-$, we have $V^+=V^-\equiv 0,$ then $u$ is symmetric with respect to the hyperplane $T_n$.
		
		(ii) Case 2: Without of loss generality, suppose by contradiction that one of the two functions, say $V^-(x)<0$ in somewhere of $\Omega_n^-$. Now we consider a connected component $D$ in $\Omega_n^-$ such that $V^-(x)<0$ in $D$ and multiply (\ref{3.7}) by $V^-(x)$. By integrating, we obtain
		$$\int_D|\nabla V^-(x)|^2dx-\int_Df'(x,u(x))(V^-(x))^2dx\leq 0,$$
		which means that $\lambda_1(\mathcal{L},D)\leq 0$. By the monotonicity of first eigenvalue with respect to domains, then $\lambda_1(\mathcal{L},\Omega_n^-)\leq0,$ which contradicts the fact $\lambda_1(\mathcal{L},\Omega_n^{-})>0.$ Thus $V^-(x)\geq 0$ in $\Omega_n^-$, similarly, $V^+(x)\geq 0$ in $\Omega_n^+$. Therefore $u$ is symmetric with respect to the hyperplane $T_n$, namely, $u(x_1,x_2,\cdots,-x_n)=u(x_1,x_2,\cdots,x_n)$. This completes the proof of Lemma \ref{le3.2}. \end{proof}

	~~~In the rest of this section, we will show the proof of Theorem \ref{thm1.1}.
	
	\begin{proof}[Proof of Theorem \ref{thm1.1}]
		Definition 3 means that $\Omega$ is rotationally symmetric with respect to $x_n$ axis. Now we suppose that $T_{x_n}$ is an any hyperplane passing through $x_n$ axis. By the symmetric assumption of $f(x,u)$ with respect to the $x$-variable and Lemma \ref{le3.2}, we know that $u$ is symmetric with respect to the hyperplane $T_{x_n}$, which implies that $u$ is rotationally symmetric with respect to $x_n$ axis. It means that $u$ is radial when $\Omega$ is a ball. If the positive solution $u$ satisfies $\frac{\partial u}{\partial r}<0$ in initial  domain $\Omega_0=B_a(0)$, where $B_a(0)$ is a ball with radius $a$. It implies that the positive solution $u$ of (\ref{1.3}) is a Morse function and has exactly one critical point.  Next we need to prove that the positive solution $u$ satisfies $\frac{\partial u}{\partial r}<0$ in a ball $\Omega_0=B_a(0)$.
		
		Our proof is based mainly on the maximum principle. The idea of using continuity method is inspired by Jerison
		and Nadirashvili's method in \cite{Jerison}. We will divide the proof into four steps.
		
		{\bf Step 1. Radial monotonicity of the positive solution in a ball}
		
		Without loss of generality, we only prove the case on the positive $x_1$ semi-axis. Write $p=(x_1,y)\in \Omega_0$ for any $y\in \mathbb{R}^{n-1}.$ For $\lambda\in (0,a)$, we define
		\begin{equation*}\label{}\begin{array}{l}
				\left\{
				\begin{array}{l}
					\sum_\lambda=\{p=(x_1,y)\in\Omega_0: x_1>\lambda\},\\
					T_\lambda=\{p=(x_1,y)\in\Omega_0: x_1=\lambda\},\\
					\sum^\prime_\lambda=\mbox{the~reflection~domain~of} ~\sum_\lambda \mbox{with~ respect~ to}~T_\lambda ,\\
					p_\lambda=(2\lambda-x_1,y).
				\end{array}
				\right.
		\end{array}\end{equation*}
		
		In domain $\sum_\lambda$, we set
		\begin{equation*}\label{}\begin{array}{l}
				w_\lambda(p)=u(p)-u(p_\lambda)~\mbox{for~any}~p\in\sum_\lambda.
		\end{array}\end{equation*}
		
		Then we have by the mean value theorem
		\begin{equation*}\label{}\begin{array}{l}
				\left\{
				\begin{array}{l}
					\triangle w_\lambda(p)+c(p,\lambda)w_\lambda(p)=0~\mbox{in}~\sum_\lambda,\\
					w_\lambda<0~\mbox{on}~\partial\sum_\lambda \setminus T_\lambda,\\
					w_\lambda\equiv 0~\mbox{on}~\partial\sum_\lambda\cap T_\lambda.
				\end{array}
				\right.
		\end{array}\end{equation*}
		where $c(p,\lambda)$ is a bounded function in $\sum_\lambda$.

		Next we claim that
		\begin{equation}\label{3.8}\begin{array}{l}
				w_\lambda<0~\mbox{in}~\sum_\lambda~\mbox{for~any}~ \lambda\in(0,a).
		\end{array}\end{equation}
		This implies in particular that $w_\lambda$ attains its maximum in $\overline{\sum_\lambda}$ along $\partial\sum_\lambda\cap T_\lambda$. By Hopf Lemma, we have
		$$D_{x_1}w_\lambda|_{x_1=\lambda}=2D_{x_1}u|_{x_1=\lambda}<0,$$
		for any $\lambda\in(0,a)$.
		
		For any $\lambda$ close to $a$, by Proposition 2.13 (the maximum principle for a narrow domain) \cite{Han}, we have $w_\lambda<0.$ Suppose that $(\lambda_0,a)$ is the largest interval of values of $\lambda$ such that $w_\lambda<0$ in $\sum_\lambda$. Next we want to prove that $\lambda_0=0.$ If $\lambda_0>0$, by continuity, we have $w_{\lambda_0}\leq 0$ in $\sum_{\lambda_0}$ and $w_{\lambda_0}\not\equiv 0$ on $\partial\sum_{\lambda_0}$. Serrin's comparison principle implies that $w_{\lambda_0}< 0$ in $\sum_{\lambda_0}$.
		Suppose by contradiction that $w_{\lambda_0}< 0$ in $\sum_{\lambda_0}$ for $\lambda_0>0$. We will prove that
		\begin{equation*}\label{}\begin{array}{l}
				w_{\lambda_0-\varepsilon}< 0 ~\mbox{in}~\sum_{\lambda_0-\varepsilon},
		\end{array}\end{equation*}
		for any small $\varepsilon>0$.
		
		Fix $\sigma>0$ (the following will be determined). Let $\mathbb{W}$ be a closed subset in $\sum_{\lambda_0}$ such that $|\sum_{\lambda_0}\backslash\mathbb{W}|<\frac{\sigma}{2}$. Then $w_{\lambda_0}< 0$ in $\sum_{\lambda_0}$ implies that
		
		\begin{equation*}\label{}\begin{array}{l}
				w_{\lambda_0}(p)\leq -\alpha<0~\mbox{for~any}~p\in\mathbb{W}~\mbox{and~some}~\alpha>0.
		\end{array}\end{equation*}
		According to the continuity, we get
		\begin{equation}\label{3.9}\begin{array}{l}
				w_{\lambda_0-\varepsilon}< 0 ~\mbox{in}~\mathbb{W}.
		\end{array}\end{equation}
		We choose $\varepsilon>0$ to be small enough that$|\sum_{\lambda_0-\varepsilon}\backslash\mathbb{W}|<\sigma$ holds. We choose $\sigma$ in such a way that we can apply  Theorem 2.32 (the maximum principle for a domain with small volume) \cite{Han} to $w_{\lambda_0-\varepsilon}$ in $\sum_{\lambda_0-\varepsilon}\backslash\mathbb{W}$. Then we have that
		\begin{equation*}\label{}\begin{array}{l}
				w_{\lambda_0-\varepsilon}\leq 0~\mbox{in}~ \sum_{\lambda_0-\varepsilon}\backslash\mathbb{W}.
		\end{array}\end{equation*}
		In addition, by Serrin's comparison principle \cite{Serrin}, we have that
		\begin{equation}\label{3.10}\begin{array}{l}
				w_{\lambda_0-\varepsilon}< 0~\mbox{in}~ \sum_{\lambda_0-\varepsilon}\backslash\mathbb{W}.
		\end{array}\end{equation}
		By (\ref{3.9}) and (\ref{3.10}), we get a contradiction, therefore
		\begin{equation*}\label{}\begin{array}{l}
				w_\lambda<0~\mbox{in}~\sum_\lambda~\mbox{for~any}~ \lambda\in(0,a).
		\end{array}\end{equation*}
		
		By $w_\lambda\equiv 0$ on $\partial\sum_\lambda\cap T_\lambda$ and Hopf Lemma, we obtain that
		\begin{equation*}\label{}\begin{array}{l}
				D_{x_1}w_\lambda|_{x_1=\lambda}=2D_{x_1}u|_{x_1=\lambda}<0 ~\mbox{for~any}~\lambda\in(0,a).
		\end{array}\end{equation*}
		Similarly, we also can prove the case on the negative $x_1$ semi-axis,  other positive and negative $x_i(i=2,3,\cdots,n)$ semi-axis respectively. By $\partial_{x_i}u(r)=\partial_{r}u(r)\frac{x_i}{r}$, and the sum of $x_i\partial_{x_i}u$ with respect to $i=1,2,\cdots, n$, we have that
		\begin{equation}\label{3.11}
			\frac{\partial u(r)}{\partial r}=\frac{1}{r}\sum_i^{n}x_i\partial_{x_i}u<0~\mbox{for}~r\neq 0.
		\end{equation}
		
		{\bf Step 2. Small perturbation}
		
		In the rest of this section, without further mentioning, we always assume that the domains are simple rotationally symmetric along $x_n$ axis.
		
		We consider the following family of domains with continuous variation
		\begin{equation}\label{3.12}
			\Omega_t:=\{x=(r,x_n): |x_n|<tg(r)+(1-t)\sqrt{a^2-r^2}, 0\leq r=(\sum_{i=1}^{n-1}x_i^2)^{\frac{1}{2}}<a\}, t\in [0, 1],
		\end{equation}
		where $\Omega_0=B_a$ with radius $a$, and $\Omega_1=\Omega$ is our target domain. By the assumption of stable solution and Lemma \ref{le3.1}, we know that the solution $u_t$ of (\ref{1.3}) in $\Omega_t$ is unique. Then the following two facts are well known:\\
		(1) $u_t$ is a continuous function of $t$ for continuously differentiable perturbation (see \cite{Courant,Kato});\\
		(2) By elliptic regularity this in turn gives that $u_t$ converges uniform to $u_{t_0}$ in sense of $C^k$ as $t \rightarrow t_0$ up to
		the boundary if boundary is $C^k$.
		
		By Lemma \ref{le2.4} and Lemma \ref{le3.2}, we know that $u_t$ is a Morse function in $\Omega_t$ for $t$ small enough, so $u_t$ has exactly one critical point in $\Omega_t$. In fact, we will show that $\partial_{x_n}u_t<0$ for $x_n>0$ in $\Omega_t$ for $t$ small. This can be done by the maximum principle. Without loss of generality, we let $v=\partial_{x_n}u_t$ in the subdomain $\Omega_{t,n}^+:=\{x\in \Omega_t:x_n>0\}.$  Now we claim that
		$$ v\leq 0~\mbox{on}~\partial\Omega_{t,n}^+.$$
		In fact, by Lemma \ref{le3.2}, we know that $u_t$ is an even function with respect to $x_n$, namely, $u(\cdot, x_n)=u(\cdot, -x_n)$, then $v=\partial_{x_n} u_t=0$ on the flat boundary $x_n=0$ of $\partial\Omega_{t,n}^+$.  In addition, since $t$ is small enough, by the continuity, we have that $v=\partial_{x_n}u_t\leq 0$ on the curved portion of $\partial\Omega_{t,n}^+$. Now we take the partial derivative of the equation in (\ref{1.3}) with respect to $x_n$, and by the monotonicity of $f$ in $x_n$-variable for $x_n>0$, then $v$ satisfies
		\begin{equation}\label{3.13}\begin{array}{l}
				\left\{
				\begin{array}{l}
					\triangle v+\frac{\partial f(x,u_t)}{\partial u_t}v=-\frac{\partial f}{\partial x_n}\geq 0~\mbox{in}~\Omega_{t,n}^+,\\
					v\leq 0 ~\mbox{on}~ \partial\Omega_{t,n}^+.
				\end{array}
				\right.
		\end{array}\end{equation}
		It follows from the assumption of stable solution and Lemma \ref{le2.2} that $v<0$ in $\Omega_{t,n}^+$, namely, $\partial_{x_n}u_t<0$ for $x_n>0$ in $\Omega_t$. By the symmetry of solution $u$, we can have that $\partial_{x_n}u_t>0$ for $x_n<0$ in $\Omega_t$.
		
		Next we will show that
		\begin{equation}\label{3.14}
			x_i\partial_{x_i} u_t<0,~\mbox{for any}~x_i\neq 0,i=1,2,\cdots, n-1.
		\end{equation}
		
		Without loss of generality, we let $w=\partial_{x_i} u_t$ in the subdomain $\Omega_{t,i}^+:=\{x\in\Omega_t : x_i>0\}.$ In order to
		apply the maximum principle to prove above mentioned monotonicity properties of $w$, we claim that
		$$w\leq 0~\mbox{on}~\partial\Omega_{t,i}^+.$$
		In fact, by the result of $u$ is rotationally symmetric with respect to $x_n$ axis, that is, $u_t(x)=u_t(r,x_n),$ where $r=|x'|, ~ x'=(x_1,x_2,\cdots,x_{n-1})$, we know that $u_t$ is an even function with respect to $x_i$ for any $i=1,2,\cdots, n-1$. Then $w=\partial_{x_i} u_t=0$ on the flat boundary $x_i=0$ of $\partial\Omega_{t,i}^+$.  In addition, because of $t$ small enough, by the continuity, we have that $\partial_{x_i}u_t\leq 0$ on the curved portion of $\partial\Omega_{t,i}^+$. Similarly, we take the partial derivative of the equation in (\ref{1.3}) with respect to $x_i$, then $w$ satisfies
		\begin{equation}\label{3.15}\begin{array}{l}
				\left\{
				\begin{array}{l}
					\triangle w+\frac{\partial f(x,u_t)}{\partial u_t}w=-\frac{\partial f}{\partial x_i}\geq 0~\mbox{in}~\Omega_{t,i}^+,\\
					w\leq 0 ~\mbox{on}~ \partial\Omega_{t,i}^+.
				\end{array}
				\right.
		\end{array}\end{equation}
		By the assumption of stable solution and Lemma \ref{le2.2},we know that $w<0$ in $\Omega_{t,i}^+$, so $x_i\partial_{x_i}u_t<0$ in $\Omega_{t,i}^+$. Similarly, we can obtain that $x_i\partial_{x_i}u_t<0$ for $x_i<0$ in $\Omega_{t}$.
		
		Finally, by $\partial_{x_i}u_t(r,x_n)=\partial_{r}u_t(r,x_n)\frac{x_i}{r}$, and the sum of $x_i\partial_{x_i}u_t$ with respect to $i=1,2,\cdots, n-1$, we have that
		\begin{equation}\label{3.16}
			\frac{\partial u_t(r, x_n)}{\partial r}=\frac{1}{r}\sum_i^{n-1}x_i\partial_{x_i}u_t<0~\mbox{for}~r\neq 0.
		\end{equation}
		This completes  Step 2 for $t$ small enough.
		
		{\bf Step 3. Exactly one critical point in $\Omega_t$ for the limiting case}
		
		We  set up the usual contradiction argument.  Suppose that there exists a $t$ ($0<t<1$) such
		that the conclusion of Theorem \ref{thm1.1} fails. Let $\eta$ be the infimum of such $t$. It follows from Step 2 that $\eta>0$. Then Theorem \ref{thm1.1} holds for $t<\eta$. Theorem \ref{thm1.1} no longer holds for $t=\eta$. By continuity, we have that
		\begin{equation*}\begin{array}{l}
				\left\{
				\begin{array}{l}
					(1) \partial_{x_n}u_\eta\leq 0~\mbox{for}~x_n>0,\\
					(2) x_i\partial_{x_i}u_\eta\leq 0~\mbox{for}~x_i\neq 0, i=1,2,\cdots,n-1,\\
					(3)\frac{\partial u_\eta(r, x_n)}{\partial r}\leq0,~\mbox{for}~ r\neq 0, ~\mbox{where}~ r=|x'|, x'=(x_1,x_2,\cdots,x_{n-1}).
				\end{array}
				\right.
		\end{array}\end{equation*}
		
		The strong maximum principle implies that strict inequalities hold in corresponding domain, that is
		\begin{equation}\label{3.17}\begin{array}{l}
				\left\{
				\begin{array}{l}
					\partial_{x_n}u_\eta< 0~\mbox{for}~x_n>0,\\
					x_i\partial_{x_i}u_\eta< 0~\mbox{for}~x_i\neq 0, i=1,2,\cdots,n-1.
				\end{array}
				\right.
		\end{array}\end{equation}
		
		Next we will show that $u_\eta$ has no critical point in $\Omega_\eta\setminus{\{o\}}.$ Assume that there is another critical point $p$ in $\Omega_\eta\setminus{\{o\}},$ then we have that
		$$\partial_{x_i}u_\eta(p)=0,~\mbox{for any}~i=1,2,\cdots,n.$$
		This contradicts (\ref{3.17}), then $u_\eta$ dos not has extra critical point in $\Omega_\eta\setminus{\{o\}}$.
		
		{\bf Step 4. Non-degeneracy of the critical point $o$ of $u_\eta$ in $\Omega_\eta$}
		
		In this step, without loss of generality, we consider subdomain $\Omega_{\eta,n,i}^+:=\{x\in \Omega_\eta:x_n>0,x_i>0\}, (i=1,2,\cdots,n-1).$ We take the partial derivative of the equation in (\ref{1.3}) with respect to $x_n$ and $x_i$ respectively, then $v=\partial_{x_n}u_\eta$ and $w=\partial_{x_i}u_\eta$ respectively satisfies
		$$\triangle v+\frac{\partial f(x,u_\eta)}{\partial u_\eta}v=-\frac{\partial f}{\partial x_n}\geq 0~\mbox{in}~\Omega_{\eta,n,i}^+,$$
		and
		$$\triangle w+\frac{\partial f(x,u_\eta)}{\partial u_\eta}w=-\frac{\partial f}{\partial x_i}\geq 0~\mbox{in}~\Omega_{\eta,n,i}^+.$$
		
		By the results of Step 2 and Step 3, we know that
		\begin{equation}\label{3.18}
			v=\partial_{x_n}u_\eta<0,~w=\partial_{x_i}u_\eta<0~\mbox{in}~ \Omega_{\eta,n,i}^+.
		\end{equation}
		
		Since $\partial_{x_n}u_\eta(o)=0, \partial_{x_i}u_\eta(o)=0,$ let us apply Serrin's lemma (Lemma \ref{le2.3}) at origin $o$ for $\partial_{x_n}u_\eta$ and $\partial_{x_i}u_\eta$ in $\Omega_{\eta,n,i}^+$, where $o$ on the sub-manifold $\{x_n=0\}\cap\{x_i=0\}$.
		Let us take $x_n$ direction as the direction $s$ for $\partial_{x_n}u_\eta$, $x_i$ direction as the direction $s$ for  $\partial_{x_i}u_\eta$. Thus we arrive at
		\begin{equation}\label{3.19}
			\partial_{x_nx_n}u_\eta<0,~\partial_{x_ix_i}u_\eta<0~\mbox{at}~ o.
		\end{equation}
		
		Since $u_\eta$ is rotationally symmetric with respect to $x_n$ axis, then the local Taylor expansion of $u_\eta$ at $o$ can be given by
		\begin{equation}\label{3.20}
			u_\eta(x)=u_\eta(r,x_n)=M+P_2+o(2),
		\end{equation}
		where $M:=u_\eta(o)$ is the maximum of $u_\eta$, and
		\begin{equation*}\label{}
			P_2=\sum_{i,j=1}^na_{ij}x_ix_j.
		\end{equation*}
		
		Since $u_\eta$ is even in $x_i$ for $i=1,2,\cdots,n-1$, thus the mixed terms $a_{ij}=0$ for $i\neq j, i=1,2,\cdots,n-1; j=1,2,\cdots,n$. Because of $u_\eta$ is rotationally symmetric with respect to $x_n$ axis, then
		$$2c_1:=a_{11}=a_{22}=\cdots=a_{n-1,n-1},~2c_2:=a_{nn}.$$
		
		It follows from (\ref{3.19}) that
		$$c_1<0, c_2<0.$$
		It means that
		$$\det|\partial_{x_ix_j}u_\eta(o)|\neq 0.$$
		Therefore $u_\eta$ is a Morse function in $\Omega_\eta.$ This completes the proof of Step 4.
		
		Up to now, we have verified all the conclusions of Theorem \ref{thm1.1} for $\Omega_\eta$. The end of Step 2 implies
		that the monotone properties of $u_\eta$ can be preserved a bit further for $t>\eta$. This contradicts to the definition of
		$\eta$, which completes the proof of Theorem \ref{thm1.1}.
	\end{proof}
	
	According to Step 2, we make the following remark.
	\begin{Remark}\label{rem3.3}
		Our initial domain $\Omega_0$ is a ball can be relaxed. In fact, by the proof of Step 2, we note that for simple rotationally symmetric domain as soon as i) the initial domain satisfied desired monotonicity properties as stated in Theorem \ref{thm1.1}; ii) the function is a Morse function, which has exactly one critical point $o$, $o$ is the origin of coordinates. Therefore the anticipating properties state in Theorem \ref{thm1.1} preserved under small deformation of such domains. In addition, the related results can be extended to more general uniformly elliptic equation, e.g., quasilinear uniformly elliptic equation.
	\end{Remark}

\section{Proof of Theorem \ref{thm1.2} }
~~~~In this section, similar to Lemma \ref{le3.2}, we use the symbols $T_n,\Omega^-_n$ and $\Omega^+_n$ in (\ref{3.1}).
Suppose that $u$ is a solution of (\ref{1.4}) and denote by $\mathcal{L}$ the linearized operator at $u$, namely
$$\mathcal{L}=-\triangle -f'(u).$$
In addition, let $\lambda_1(\mathcal{L},D)$ be the first eigenvalue of $\mathcal{L}$ in a subdomain $D\subset\Omega$ with zero Dirichlet boundary condition. Then we have the following lemmas.
	
	\begin{Lemma}\label{le4.1}
		Let $u$ be a stable solution of (\ref{1.4}). If $f(u)$ is convex with respect to $u$, then $u$ is symmetric with respect to the hyperplane $T_n$, i.e. $u(x_1,x_2,\cdots,-x_n)=u(x_1,x_2,\cdots,x_n)$.
	\end{Lemma}
	\begin{proof} Firstly, we assume that $v^+$ and $v^-$ are the reflected functions of $u$ in the domains $\Omega_n^+$ and $\Omega_n^-$, respectively,
		\begin{equation*}\label{}
			\left\{
			\begin{array}{l}
				v^+(x)=u(x_1,x_2,\cdots,-x_n) ~~\mbox{for}~x\in\Omega_n^+,\\
				v^-(x)=u(x_1,x_2,\cdots,-x_n) ~~\mbox{for}~x\in\Omega_n^-.
			\end{array}
			\right.
		\end{equation*}
		Since $f(u)$ is convex, we have
		\begin{equation*}\label{}
			\left\{
			\begin{array}{l}
				f(v^+(x))-f(u(x))\geq f'(u(x))(v^+(x)-u(x)) ~~\mbox{in}~x\in\Omega_n^+,\\
				f(v^-(x))-f(u(x))\geq f'(u(x))(v^-(x)-u(x))~~\mbox{in}~x\in\Omega_n^-.
			\end{array}
			\right.
		\end{equation*}
		 According to (\ref{1.4}), denote by the functions $w^+=v^+-u$ and $w^-=v^--u$, then we have
		\begin{equation}\label{4.2}\begin{array}{l}
				\left\{
				\begin{array}{l}
					-\triangle w^+ -f'(u(x))w^+\geq 0~~\mbox{in}~\Omega_n^+,\\
					w^+=0~~\mbox{on}~\partial\Omega_n^+,
				\end{array}
				\right.
		\end{array}\end{equation}
		and
		\begin{equation}\label{4.3}\begin{array}{l}
				\left\{
				\begin{array}{l}
					-\triangle w^--f'(u(x))w^-\geq 0~~\mbox{in}~\Omega_n^-,\\
					w^-=0~~\mbox{on}~\partial\Omega_n^-,
				\end{array}
				\right.
		\end{array}\end{equation}
		 The following proof is similar to Lemma \ref{le3.2}, we omit the proof. \end{proof}
	
	In the rest of this section, we will present the proof of Theorem \ref{thm1.2}.
	
	\begin{proof}[Proof of Theorem \ref{thm1.2}]
		Definition 2 means that $\Omega$ is rotationally symmetric with respect to $x_n$ axis. Now we suppose that $T_{x_n}$ is an any hyperplane passing through $x_n$ axis. By the symmetric assumption of domain $\Omega$ and Lemma \ref{le4.1}, we know that $u$ is symmetric with respect to the hyperplane $T_{x_n}$. This implies that $u$ is rotationally symmetric with respect to $x_n$ axis. It means that $u$ is radial when $\Omega$ is a ball.  In the case of domains deformation, the remaining proof is similar to Theorem \ref{thm1.1}. Next we will divide the proof into three steps.
		
		{\bf Step 1. Small perturbation}
		
		Domains $\Omega_t$ is defined in (\ref{3.13}), where $\Omega_0=B_a$ with radius $a$ and $\Omega_1=\Omega$ is our target domain. By the assumption of stable solution and Lemma \ref{le3.1}, we know that the solution $u_t$ to (\ref{1.4}) in $\Omega_t$ is unique. Then the following two facts are well known:\\
		(1) $u_t$ is a continuous function of $t$ for continuously differentiable perturbation (see \cite{Courant,Kato});\\
		(2) By elliptic regularity this in turn gives that $u_t$ converges uniform to $u_{t_0}$ in sense of $C^k$ as $t \rightarrow t_0$ up to
		the boundary if boundary is $C^k$.
		
		By Lemma \ref{le2.4}, and Lemma \ref{le4.1}, we know that $u_t$ is a Morse function in $\Omega_t$ for $t$ small enough, so $u_t$ has exactly one critical point in $\Omega_t$. In fact, similar to the step 2 of Theorem \ref{thm1.1}, we can obtain
		
		\begin{equation}\label{4.4}\begin{array}{l}
				\left\{
				\begin{array}{l}
					\partial_{x_n}u_t<0~~\mbox{for }~x_n>0,\\
					x_i\partial_{x_i} u_t<0,~\mbox{for any}~x_i\neq 0,i=1,2,\cdots, n-1,
				\end{array}
				\right.
		\end{array}\end{equation}
		for $t$ small enough.
		
		{\bf Step 2. Exactly one critical point in $\Omega_t$ for the limiting case}
		
		We  set up the usual contradiction argument.  Suppose that there exists a $t$ ($0<t<1$) such
		that the conclusion of Theorem \ref{thm1.2} fails. Let $\eta$ be the infimum of such $t$. It follows from Step 1 that $\eta>0$. Then Theorem \ref{thm1.2} holds for $t<\eta$. Theorem \ref{thm1.2} no longer holds for $t=\eta$. By continuity, we have that
		\begin{equation*}\begin{array}{l}
				\left\{
				\begin{array}{l}
					(1) \partial_{x_n}u_\eta\leq 0~\mbox{for}~x_n>0,\\
					(2) x_i\partial_{x_i}u_\eta\leq 0~\mbox{for}~x_i\neq 0, i=1,2,\cdots,n-1,\\
					(3)\frac{\partial u_\eta(r, x_n)}{\partial r}\leq0,~\mbox{for}~ r\neq 0, ~\mbox{where}~ r=|x'|, x'=(x_1,x_2,\cdots,x_{n-1}).
				\end{array}
				\right.
		\end{array}\end{equation*}
		
		The strong maximum principle implies that strict inequalities hold in corresponding domain, that is
		\begin{equation}\label{4.5}\begin{array}{l}
				\left\{
				\begin{array}{l}
					\partial_{x_n}u_\eta< 0~\mbox{for}~x_n>0,\\
					x_i\partial_{x_i}u_\eta< 0~\mbox{for}~x_i\neq 0, i=1,2,\cdots,n-1.
				\end{array}
				\right.
		\end{array}\end{equation}
		
		Next we will show that $u_\eta$ has no critical point in $\Omega_\eta\setminus{\{o\}}.$ Assume that there is another critical point $p$ in $\Omega_\eta\setminus{\{o\}},$ then we have that
		$$\partial_{x_i}u_\eta(p)=0,~\mbox{for any}~i=1,2,\cdots,n.$$
		This contradicts (\ref{4.5}), then $u_\eta$ has no extra critical point in $\Omega_\eta\setminus{\{o\}}$.
		
		{\bf Step 3. Non-degeneracy of the critical point $o$ of $u_\eta$ in $\Omega_\eta$}
		
		In this step, without loss of generality, we consider subdomain $\Omega_{\eta,n,i}^+:=\{x\in \Omega_\eta:x_n>0,x_i>0\}, (i=1,2,\cdots,n-1).$ We take the partial derivative of the equation in (\ref{1.4}) with respect to $x_n$ and $x_i$ respectively, then $v=\partial_{x_n}u_\eta$ and $w=\partial_{x_i}u_\eta$ respectively satisfies
		$$\triangle v+f'(u_\eta)v=0~\mbox{in}~\Omega_{\eta,n,i}^+,$$
		and
		$$\triangle w+f'(u_\eta)w=0~\mbox{in}~\Omega_{\eta,n,i}^+.$$
		
		By the results of Step 1 and Step 2, we know that
		
		\begin{equation}\label{4.6}
			\partial_{x_n}u_\eta<0,~\partial_{x_i}u_\eta<0~\mbox{in}~ \Omega_{\eta,n,i}^+.
		\end{equation}
		The rest of the proof is similar to the step 4 of Theorem \ref{thm1.1}. Here, we omit the proof.
		
		Up to now, we have verified all the conclusions of Theorem \ref{thm1.2} for $\Omega_\eta$. The end of Step 1 implies
		that the monotone properties of $u_\eta$ can be preserved a bit further for $t>\eta$. This contradicts to the definition of
		$\eta$, which completes the proof of Theorem \ref{thm1.2}.
	\end{proof}
	
	\begin{Remark}\label{rem4.3}
		For a planar bounded simple rotationally symmetric domain $\Omega,$ i.e. $\Omega$ is a planar Steiner symmetric domain, by continuity method and a variety of maximum principles, we can also prove that a positive stable solution $u$ of (\ref{1.1}) has a unique critical point, and this critical point is non-degenerate, where $f(x,u)$ is a convex function with respect to $u$ and decreasing in the $x_i$-variable for $x_i>0(i=1,2)$.
	\end{Remark}

\noindent {\bf Conflict of Interest}  { The authors declare no conflict of interest.}

\end{document}